\newtheorem{theorem}{Theorem}
\newtheorem{lemma}{Lemma}
\newtheorem*{remark*}{Remark}
\newtheorem{definition}{Definition}
\newtheorem{prop}{Proposition}
\newtheorem{example}{Example}
\newtheorem*{problem*}{Problem}
\title{\LARGE \bf
Uncertainty Intervals for Robust Bottleneck Assignment}
\author{Elad Michael$^{*}$, Tony A. Wood, Chris Manzie, and Iman Shames
\thanks{$^{*}$All authors are with Department of Electrical and Electronic Engineering at the University of Melbourne
{\tt\small eladm@student.unimelb.edu.au,
        \{wood.t,manziec,iman.shames\}@unimelb.edu.au}}%
}
\begin{document}

\maketitle
\thispagestyle{empty}
\pagestyle{empty}

\begin{abstract}

We examine the robustness of bottleneck assignment problems to perturbations in the assignment weights. We derive two algorithms that provide uncertainty bounds for robust assignment. We prove that the bottleneck assignment is guaranteed to be invariant to perturbations which lie within the provided bounds. We apply the method to an example of task assignment for a multi-agent system.
\end{abstract}

\section{INTRODUCTION}

%

This paper focuses on quantifying the set of weight perturbations which maintain the optimality of the original assignment. Task assignment is a central part of autonomous multi-agent systems, assigning agents to tasks in such a way as to optimize some measure of the quality of the assignment. We model the problem with a graph, with the agents and tasks as vertices and the assignments as weighted edges. The optimization cost can be any of minimum weight, minimum total weight, balanced weight, or one of many other types of assignment problems. In many scenarios, especially in physical applications, the weight of assigning an agent to a task has some associated uncertainty. This could be due to a physical measurement error such as a radar covariance, a dynamic environment within which the decisions are being made, or simply decimal roundoff errors. The optimal assignment is robust to some set of perturbations, and this paper provides a parameterization of that set of allowable perturbations for the bottleneck assignment problem.\\

This problem is studied under various names, such as ``sensitivity"\cite{lin2007sensitivity}, ``stability"\cite{klaus2013paths}, or ``post-optimality"\cite{KINDERVATER198576}. We use \emph{robustness} to refer to the range of variations in the problem parameters under which we can certify the optimal solution. An overview of modern linear sensitivity analysis and its application is covered in \cite{jansen1997sensitivity}. In \cite{jansen1997sensitivity}, they differentiate between several types of sensitivity analysis, focused on different aspects of the optimal solution such as optimal cost, active constraints, and the slope of the optimal cost function. The assignment problem, due to its additional constraint structure, is best approached using what is defined as ``Type 2 Sensitivity Analysis". However, \cite{jansen1997sensitivity} focuses on perturbations of a single variable. We will use the term ``robustness" to describe the effects of uncertainty in weight on all of the assignments.\\

In this paper we focus on the robustness of the bottleneck assignment problem (BAP). In a matching of agents to tasks, each with it's associated weight, the BAP seeks to minimize the maximum weight within a matching. In \cite{8264324} the BAP to assign multiple decoys to multiple incoming threats. The bottleneck property minimizes the worst case time it takes each decoy to seduce and divert the threat away from the asset. This ensures that all threats are dealt with in the minimum time. The BAP is used in other minimum time applications,such as \cite{Ravindran1977} where they give the example of transporting perishable goods or military supplies reaching command posts during an emergency.\\

An algorithm with robustness guarantees is provided in \cite{volgenant2010improved} along with several complexity improvements on others' work. However, this approach is used to find robust solutions, not to quantify the robustness of the optimal solution to the original problem. Robustness for the linear assignment problem to correlated perturbations in adjacent to a single node is examined in \cite{lin2007sensitivity}, reflecting an error in the state of an agent or a task. In \cite{sotskovStability1995}, robustness/stability/sensitivity of the solution for the case where all perturbations are bounded by the same bound is established. This differs from the results of this paper where perturbation bounds are not uniform for all weights. Additionally, contrasting with \cite{sotskovStability1995}, the algorithms for computing these bounds are presented. \\

Our contribution in this paper is finding individual perturbation bounds for each weight within which we certify the optimality of the original assignment solution. We allow all weights to vary independently and simultaneously, rather than restricting variations to a subset of the edges. In the presence of uncertainty, these bounds may provide sufficient robustness to certify that the assignment made was optimal. However, it may be that the assignments made are not robust enough for the user's needs. In this case, the algorithms proposed here can only alert the user to that fact, they do not propose a more robust assignment solution. \\




The paper is organized as follows. Section 2 is devoted to background on the assignment problem and a formal definition of robustness. Section 3 will provide the lower bounds for the bottleneck assignment problem, and the algorithms to find them. Section 4 covers numerical examples and implementation. Section 5 concludes and remarks on further analysis.\\

\section{Problem Formulation}

Define a graph $\mathcal{G}=(\mathcal{V},\mathcal{E},\mathcal{W})$ over which we will solve the assignment problem. Let the set of vertices be $\mathcal{V} = \mathcal{V}_A \cup \mathcal{V}_T$ such that $\mathcal{V}_A \cap \mathcal{V}_T = \varnothing$ and $|\mathcal{V}_A| \leq |\mathcal{V}_T|$, the edge set $\mathcal{E} \subseteq \mathcal{V}_A \times \mathcal{V}_T$, and edge weights $\mathcal{W} = \{ w_{ij} \in \mathbb{R} \mid (i,j) \in \mathcal{E} \}$. We also define a set of binary decision variables variable $\{\pi _{ij} \mid \pi_{ij} \in \{0,1\},(i,j) \in \mathcal{E}\}$ to indicate if the edge from vertex $i$ to $j$ was assigned. With this graph and these decision variables, the bottleneck assignment problem can be formulated as

\begin{subequations}
\begin{align}
\underset{\pi}{\min}\phantom{ba} \underset{(i,j) \in \mathcal{E}}{\max}\phantom{ba}& \pi _{ij} w_{ij} \\
\textrm{subject to } &\sum_{i \in \mathcal{V}_A} \pi_{ij} = 1,\phantom{ba} j \in \mathcal{V}_T \label{eq:fullMatching}\\
&\sum_{j \in \mathcal{V}_T} \pi_{ij} \leq 1,\phantom{ba} i \in \mathcal{V}_A. \label{eq:taskDominant}
\end{align}
\end{subequations}

If $\pi_{ij} = 1$ then we say that vertex $i$ is assigned to vertex $j$. In the rest of the paper the double indices associated with edge $(i,j) \in \mathcal{E}$ will be represented as a single index. Constraints \eqref{eq:fullMatching} and \eqref{eq:taskDominant} ensure that every vertex is assigned to at most one other vertex. Two edges are called adjacent if they share a common vertex. A matching in $\mathcal{G}$ is a set of pairwise non-adjacent edges. That is, no two edges share a common vertex. A \emph{maximum matching} $\mathcal{M}$ in $\mathcal{G}$ is a matching with the largest number of edges. The bottleneck problem finds the maximum matching on the graph which minimizes the worst case edge weight. In this paper we focus on maximal cardinality bipartite matchings, where in at least one of the vertex sets $\mathcal{V}_A$ all vertices are included in the matching. However, for brevity, we will refer to maximal cardinality bipartite matchings as maximal matchings.\\

The bottleneck assignment problem is well studied\cite{burkard1999linear}, and there are a many algorithms that solve it. The threshold algorithm, Algorithm~\ref{alg:bottleneck} below, solves the bottleneck assignment problem\cite{burkard1999linear}, and provides intuition used later in the paper. The algorithm first sorts the edge set in descending according to their weights (line 1). If a maximum matching exists within the sorted edge set (line 3), then the edge with the greatest weight is removed (line 5). Checking for a maximum matching and removing the greatest weight edge is repeated until no maximum matching exists. The algorithm terminates with the last removed edge being the bottleneck edge. 
\begin{algorithm}
\caption{Threshold Algorithm for solving the BAP \label{alg:bottleneck}}
 \KwData{$\mathcal{G}=(\mathcal{V},\mathcal{E},\mathcal{W})$}
 \KwResult{bottleneckAssignment}
 $\bar{\mathcal{E}} \leftarrow \mathcal{E}$\;
 \While{Maximum Matching $\in$ $\bar{\mathcal{E}}$}{
  $e \leftarrow \underset{\bar{e}\in \bar{\mathcal{E}}}{\text{argmax}} \phantom{bo} \mathcal{W}$ \;
  $\bar{\mathcal{E}} \leftarrow \bar{\mathcal{E}}\setminus \{e\}$\;
 }
 return e $\backslash \backslash$ \emph{The bottleneck edge} 
\end{algorithm}
Algorithm~\ref{alg:bottleneck} has complexity $\mathcal{O}(T(N)N)$ where $T(N)$ is the complexity for checking the existence of a maximum matching in the set of edges $|\mathcal{E}| = N$. The most common method is the Hopcroft-Karp Algorithm, which has complexity $\mathcal{O}(|\mathcal{E}|\sqrt{|\mathcal{V}|})$ \cite{burkard1999linear}. There are faster probabilistic methods, see \cite{burkard1999linear} for a survey of methods. This gives Algorithm~\ref{alg:bottleneck} complexity $\mathcal{O}(N^2\sqrt{|\mathcal{V}|})$. Note that Algorithm~\ref{alg:bottleneck} is given here only for intuition and context, the methods described below pertain only to the robustness analysis of the optimal solution. The actual algorithm used to find the bottleneck is irrelevant to our results.\\

To formally define robustness related to the bottleneck assignment problem, we first define the perturbed graph. 
\begin{definition}
Given a graph $\mathcal{G}=(\mathcal{V},\mathcal{E},\mathcal{W})$, define the perturbed graph $\bar{\mathcal{G}}=(\mathcal{V},\mathcal{E},\bar{\mathcal{W})}$ such that  $\bar{\mathcal{W}} = \{ w_e + \delta_e \mid e\in\mathcal{E} \}$, where $w_e \in \mathcal{W}$ is the weight of edge $e$ from the original graph and $\delta_e\in D$ is some perturbation from a set of scalar perturbations $D \subseteq \mathbb{R}$. \label{def:perturbedGraph}
\end{definition}

Define the mapping $A(\mathcal{G})$ which takes a graph and returns the index of the bottleneck edge. If there are multiple equivalent bottleneck edges, then the mapping will the return the set of them.
\begin{definition}
Let $e^*\in A(\mathcal{G})$ be the bottleneck edge of the graph $\mathcal{G}$. If $e^* \in A(\bar{\mathcal{G}})$, for the perturbed weight graph in Definition~\ref{def:perturbedGraph}, then the bottleneck assignment $e^*$ is robust to the set of perturbations $\{ \delta_e \mid e \in \mathcal{E}\}$. \label{def:Robustness}
\end{definition}
With this definition of robustness, the problem can be stated as follows.
\begin{problem*}
Find the set of allowable perturbations $\Lambda$ such that the bottleneck assignment $e^* \in \mathcal{G}$ is robust to the perturbation $\lambda = \{ \delta_e \mid e \in \mathcal{E}\}$ if and only if $\lambda \in \Lambda$.
\end{problem*}

\begin{remark*}
The parametrization of the set $\Lambda$ as defined in the problem statement has tangible repercussions for task assignment under uncertainty. For example, suppose the graph represents a task assignment in a physical multi-agent system. The agents and tasks are the vertices, and the edges are the possible assignments of agents to tasks, each with an associated weight. If the uncertainties associated with the assignment weights fall within the set of allowable perturbations $\Lambda$, then the bottleneck assignment is guaranteed to remain optimal, preventing any need for reassignment.
\end{remark*}

\section{Robustness of the Bottleneck}

To parametrize the set of allowable perturbations $\Lambda$, we first propose an optimization problem to find a subset $\Lambda_\Delta = \{\delta_e \mid |\delta_e| \leq \Delta , e \in \mathcal{E}\} \subseteq \Lambda$
\begin{subequations}
\begin{align}
\underset{\Delta \in \mathbb{R}}{\max} \phantom{bl} &\Delta \label{eq:deltaMax}\\ 
\text{s.t.} \phantom{bla} &A(\bar{\mathcal{G}}) = A(\mathcal{G}),  \label{eq:2b}\\ 
&\bar{\mathcal{W}} = \{w_e + \delta_e \mid w_e \in \mathcal{W}, |\delta_e| \leq \Delta, e \in \mathcal{E}\},
\end{align}
\end{subequations}
where we have continued to use the graph notation presented in Definition~\ref{def:perturbedGraph}.\\

To find $\Lambda _\Delta$, we first define three bottleneck edges
\begin{align}
&e^- \in A(\mathcal{G}^-), \label{eq:i-},\\
&e^* \in A(\mathcal{G}), \label{eq:i},\\
&e^+ \in A(\mathcal{G}^+) \label{eq:i+}.
\end{align}
These are the bottleneck edges of the original graph $\mathcal{G}$ and of two subgraphs $\mathcal{G}^-$ and $\mathcal{G}^+$. Define $\mathcal{G}^-$ to be the graph constructed by removing the vertices, edges, and weights adjacent to and including $e^*$ and $\mathcal{G}^+$ to be constructed by removing only the edge and weight $e^*,w_{e^*}$ from the original graph. The differences between the weights of these three bottleneck edges $e^-$,$e^*$, and $e^+$ will bound the allowable perturbations, leading to the parameterization of $\Lambda_\Delta$.
\begin{lemma}
Consider the indices $e^-$,$e^*$, and $e^+$, defined in \eqref{eq:i-} to \eqref{eq:i+}. Then $w_{e^-} \leq w_{e^*}$, and if $e^+$ exists then $w_{e^*} \leq w_{e^+}$.\\
\label{lm:Ordering}
\end{lemma}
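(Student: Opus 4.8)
The plan is to reduce everything to one standard fact: the weight $w_{e^*}$ of a bottleneck edge equals the optimal value of the BAP on $\mathcal{G}$, i.e. $w_{e^*} = \min_{\mathcal{M}} \max_{e \in \mathcal{M}} w_e$ over maximal matchings $\mathcal{M}$ of $\mathcal{G}$, and this optimum is attained by some maximal matching $\mathcal{M}^*$ with $e^* \in \mathcal{M}^*$ and $w_{e^*} = \max_{e\in\mathcal{M}^*} w_e$ (this is implicit in the correctness of Algorithm~\ref{alg:bottleneck}: the returned edge is the heaviest edge of a maximal matching that still exists in the current edge set and no maximal matching survives its removal). The identical statement holds for $w_{e^-}$ and $w_{e^+}$ as the optimal BAP values of $\mathcal{G}^-$ and $\mathcal{G}^+$. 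Each of the two claimed inequalities is then obtained by exhibiting a single maximal matching in the relevant graph and reading off its largest weight as a bound on the corresponding optimum.

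For $w_{e^-} \le w_{e^*}$ I would start from the optimal matching $\mathcal{M}^*$ of $\mathcal{G}$ containing $e^*$, delete $e^*$, and argue that $\mathcal{M}^* \setminus \{e^*\}$ is a \emph{maximal} matching of $\mathcal{G}^-$. The key observation is that $\mathcal{G}^-$ is formed by deleting both endpoints of $e^*$, hence exactly one vertex of $\mathcal{V}_A$; since $\mathcal{M}^*$ saturated $\mathcal{V}_A$, the reduced matching saturates $\mathcal{V}_A$ minus that vertex and uses no deleted edge, so it is maximal in $\mathcal{G}^-$. Therefore $w_{e^-} \le \max_{e \in \mathcal{M}^*\setminus\{e^*\}} w_e \le \max_{e\in\mathcal{M}^*} w_e = w_{e^*}$.

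For $w_{e^*} \le w_{e^+}$ (assuming $e^+$ exists, i.e. $\mathcal{G}^+$ still admits a maximal matching) I would take an optimal matching $\mathcal{M}^+$ of $\mathcal{G}^+$ with $\max_{e\in\mathcal{M}^+} w_e = w_{e^+}$. Since $\mathcal{G}^+$ has the same vertex set as $\mathcal{G}$ and $\mathcal{E}(\mathcal{G}^+) = \mathcal{E}\setminus\{e^*\}\subset\mathcal{E}$, the matching $\mathcal{M}^+$ is also a maximal matching of $\mathcal{G}$, so optimality of $w_{e^*}$ on $\mathcal{G}$ gives $w_{e^*} \le \max_{e\in\mathcal{M}^+} w_e = w_{e^+}$; equivalently, a maximal matching of $\mathcal{G}^+$ cheaper than $w_{e^*}$ would contradict $w_{e^*}$ being the BAP optimum of $\mathcal{G}$.

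The bulk of the work is just bookkeeping about which vertices and edges survive in $\mathcal{G}^-$ and $\mathcal{G}^+$. The one genuine point to get right — and the main (albeit minor) obstacle — is the claim that removing $e^*$ from $\mathcal{M}^*$ leaves a maximal matching of $\mathcal{G}^-$ rather than merely a matching; this is precisely where the asymmetric definition of $\mathcal{G}^-$ (deleting both endpoints of $e^*$, not just the edge) is essential, and it also shows $e^-$ is well defined whenever $|\mathcal{V}_A|\ge 2$.
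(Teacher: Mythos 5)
Your proposal is correct and follows essentially the same route as the paper: both inequalities are obtained by exhibiting the same witness matchings (the optimal matching of $\mathcal{G}$ with $e^*$ removed viewed as a maximal matching of $\mathcal{G}^-$, and an optimal matching of $\mathcal{G}^+$ viewed as a maximal matching of $\mathcal{G}$ avoiding $e^*$) and comparing bottleneck values. The only cosmetic difference is that the paper argues the $w_{e^-}\leq w_{e^*}$ direction by contradiction, while you argue it directly and spell out the saturation bookkeeping that the paper leaves implicit.
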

\begin{proof}
First we will prove that if $e^+$ exists, $w_{e^*} \leq w_{e^+}$. By the definition of $e^*$, there is no maximum matching on the original graph $\mathcal{G}$ composed of edges that all have weight less than $w_{e^*}$. Since $w_{e^+}$ is an edge with the greatest weight in a maximum matching on the graph $\mathcal{G^+}$, which is missing the edge $e^*$, $w_{e^+}$ must be greater than or equal to $w_{e^*}$.\\

Next, to obtain a contradiction, assume $w_{e^-}$ is greater than $w_{e^*}$. By the definition of $e^*$, there exists a maximum matching on $\mathcal{G}$ with $e^*$ the greatest weight edge in that matching. Let $\pi^* \subseteq \mathcal{E}$ represent the edges in that matching. The set $\pi ^* \setminus \{e^*\} \subseteq \mathcal{E}^-$ and is a maximum matching on $\mathcal{G}^-$. However, the edge with the greatest weight in $\pi ^* \setminus \{e^*\}$ has weight less than or equal to $w_{e^*}$, which contradicts $w_{e^-}$ being greater than $w_{e^*}$.
\end{proof}
\bigbreak


Using the concept of the subgraphs $\mathcal{G}^-$ and $\mathcal{G}^+$, we can bound $\Delta$ from \eqref{eq:deltaMax}, and construct $\Lambda _\Delta$.
\begin{theorem}
Let $w_{e^*}$ be the weight of the bottleneck edge $e^*$ on the graph $\mathcal{G}$, and let $w_{e^-}$, and $w_{e^+}$ be the bottleneck weights of the subgraphs $\mathcal{G}^-$ and $\mathcal{G}^+$, respectively, as defined in \eqref{eq:i-} to \eqref{eq:i+}. The bottleneck assignment is robust, in the sense of Definition~\ref{def:Robustness}, to perturbations in $\Lambda _\Delta$ where
\begin{equation}
0 \leq \Delta \leq \frac{1}{2}\min(w_{e^*}-w_{e^-},w_{e^+}-w_{e^*}).
\label{eq:DeltaRobust}
\end{equation}

In the case that the graph $\mathcal{G}^+$ as defined in \eqref{eq:i+} does not have a maximal matching, the perturbations are bounded by

\begin{equation}
0 \leq \Delta \leq \frac{1}{2}(w_{e^*}-w_{e^-}). 
\end{equation}
\label{thm:adjBottlenecks}
\end{theorem}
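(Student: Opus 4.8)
The strategy is to certify, directly on the perturbed graph, the two features that make $e^*$ a bottleneck edge. Write $\bar w_e = w_e + \delta_e$ for the perturbed weights; since $\mathcal{G}$ and $\bar{\mathcal{G}}$ have the same vertices and edges, they have the same family of maximal matchings. I will show that (a) the bottleneck value of $\bar{\mathcal{G}}$, namely $\min_{\bar M}\max_{f\in\bar M}\bar w_f$ over maximal matchings $\bar M$, equals $\bar w_{e^*}$, and (b) there is a maximal matching of $\bar{\mathcal{G}}$ containing $e^*$ in which $e^*$ is a heaviest edge. By the description of the bottleneck edge via the threshold algorithm (Algorithm~\ref{alg:bottleneck}), (a) and (b) together give $e^*\in A(\bar{\mathcal{G}})$, which is robustness in the sense of Definition~\ref{def:Robustness}.

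For (b) and the ``$\le$'' part of (a) I would use $\mathcal{G}^-$. Let $M^-$ be a maximal matching of $\mathcal{G}^-$ whose heaviest edge has weight $w_{e^-}$. Since $\mathcal{G}^-$ deletes only the two endpoints of $e^*$, $M^-$ covers all of $\mathcal{V}_A$ except the agent-endpoint of $e^*$, so $M^-\cup\{e^*\}$ is a maximal matching of $\mathcal{G}$ (and of $\bar{\mathcal{G}}$). In $\bar{\mathcal{G}}$ every $f\in M^-$ has $\bar w_f\le w_{e^-}+\Delta$, while $\bar w_{e^*}\ge w_{e^*}-\Delta$; using $\Delta\le\tfrac12(w_{e^*}-w_{e^-})$, which is meaningful because $w_{e^-}\le w_{e^*}$ by Lemma~\ref{lm:Ordering}, gives $w_{e^-}+\Delta\le\tfrac12(w_{e^*}+w_{e^-})\le w_{e^*}-\Delta\le\bar w_{e^*}$. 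Hence the heaviest edge of $M^-\cup\{e^*\}$ in $\bar{\mathcal{G}}$ is $e^*$, proving (b) and showing the perturbed bottleneck value is at most $\bar w_{e^*}$.

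For the ``$\ge$'' part of (a), take an arbitrary maximal matching $\bar M$ of $\bar{\mathcal{G}}$. If $e^*\in\bar M$ its heaviest edge weighs at least $\bar w_{e^*}$. Otherwise $\bar M$ is a maximal matching of $\mathcal{G}^+$, so by definition of $e^+$ it contains an edge $g$ with $w_g\ge w_{e^+}$; then $\bar w_g\ge w_{e^+}-\Delta$, and $\Delta\le\tfrac12(w_{e^+}-w_{e^*})$ (meaningful since $w_{e^*}\le w_{e^+}$ by Lemma~\ref{lm:Ordering}) yields $\bar w_g\ge\tfrac12(w_{e^+}+w_{e^*})\ge w_{e^*}+\Delta\ge\bar w_{e^*}$. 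In both cases $\max_{f\in\bar M}\bar w_f\ge\bar w_{e^*}$, so the perturbed bottleneck value equals $\bar w_{e^*}$ and $e^*\in A(\bar{\mathcal{G}})$. If instead $\mathcal{G}^+$ has no maximal matching, the second case cannot arise (every maximal matching of $\bar{\mathcal{G}}$ must use $e^*$), so the conclusion holds using only $\Delta\le\tfrac12(w_{e^*}-w_{e^-})$, the second displayed bound in the theorem.

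The step I expect to require the most care is the boundary case $\Delta=\tfrac12\min(w_{e^*}-w_{e^-},w_{e^+}-w_{e^*})$, where the chain of inequalities above degenerates to equalities: then $M^-\cup\{e^*\}$ may contain another edge tied with $e^*$ as heaviest, and one has to be sure that the set-valued convention for $A$ still counts $e^*$ among the bottleneck edges rather than having a tie knock it out; a related subtlety is keeping the worst-case signs $\delta_{e^*}=-\Delta$ and $\delta_f=+\Delta$ consistent given that all weights are perturbed simultaneously. A minor point is justifying that $M^-\cup\{e^*\}$ is indeed maximal in $\mathcal{G}$, which follows from $e^*$ lying in some maximal matching of $\mathcal{G}$ (it is the bottleneck edge) so that deleting its two endpoints lowers the cardinality of a maximal matching by exactly one.
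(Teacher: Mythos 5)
Your proposal is correct and rests on the same two ingredients as the paper's own proof: the matching $M^-\cup\{e^*\}$ (the paper's $\pi^*$) shows the perturbed bottleneck value is at most $\bar w_{e^*}$, and the defining property of $e^+$ shows any maximal matching avoiding $e^*$ has a perturbed edge weight at least $\bar w_{e^*}$. The only difference is presentational — you argue directly that the perturbed bottleneck value equals $\bar w_{e^*}$, whereas the paper runs the same two steps as a proof by contradiction against a hypothetical alternative bottleneck edge $x$ with $\bar w_x\neq\bar w_{e^*}$ — and you additionally make explicit some details (maximality of $M^-\cup\{e^*\}$, the case where $\mathcal{G}^+$ has no maximal matching, and the tie at the boundary $\Delta$) that the paper leaves implicit.
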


\begin{proof}
We will prove by contradiction that no edge $x$ with weight $w_x \neq w_{e^*}$ will be the bottleneck of the perturbed weight graph under some set of allowable perturbations.\\

Let $\bar{w}_i= w_e + \delta_e$ be weight of edge $e$ after perturbation. Note that $e^*$ and $e^-$ guarantee the existence of a maximum matching with greatest weight $w_{e^*}$ and second greatest weight $w_{e^-}$. Let $\pi^*$ be the set of edges corresponding to that maximum matching.\\

To obtain a contradiction, let the edge $x \neq e^*$ be the bottleneck of the perturbed weight graph $\bar{\mathcal{G}}$ as in Definition~\ref{def:perturbedGraph}, with perturbed weight $\bar{w}_x \neq \bar{w}_{e^*}$.\\

We will first establish that $\bar{w}_x$ must be strictly less than $\bar{w}_{e^*}$. By the definition of the bottleneck, $\bar{w}_x$ is less than or equal to the greatest weight edge in every maximum matching on the perturbed weight graph. Therefore, $\bar{w}_x$ is less than or equal to the greatest weight edge in the set $\pi ^*$. By Lemma~\ref{lm:Ordering} and the bound on the perturbations, $\bar{w}_{e^*}$ is the greatest weight edge in the matching $\pi ^*$. Therefore, $\bar{w}_x < \bar{w}_{e^*}$. Additionally, because $\bar{w}_x$ is the bottleneck edge, it must be the greatest weight edge in a maximum matching $\pi ^x$, and thus each edge in $\pi ^x$ must have a perturbed weight less than $\bar{w}_{e^*}$ and clearly $e^* \notin \pi^x$. \\

We will now establish that since $\bar{w}_x < \bar{w}_{e^*}$, it cannot be the bottleneck edge. Because of the bound on the perturbations, the only edges which can be perturbed to have weight less than $\bar{w}_{e^*}$ are those edges which have unperturbed weight $w_j < w_{e^+}$. Therefore, every edge in the maximal matching $\pi ^x$ must have an unperturbed weight less than $w_{e^+}$. However, by the definition of $e^+$, there is no maximal matching composed of weights less than $w_{e^+}$ that does not contain $e^*$. Therefore, $\pi ^x$ does not exist and $\bar{w}_x$ cannot be the new bottleneck edge.

\end{proof}
\bigbreak

\begin{example} 
Consider a scenario where $|\mathcal{V_A}|=|\mathcal{V_T}|=3$ and $\mathcal{E}=\mathcal{V}_A\times \mathcal{V}_T$. This could represent a physical scenario with a fully connected set of three agents and tasks. The weight set $\mathcal{W}$ is represented by a matrix $C$, where $w_{ij}$ is its $ij$-th element. This is a convenient representation and will be used for the rest of the paper. This matrix is termed the cost matrix and for this example is \\
\begin{align*} C = 
\begin{bmatrix}
 3 & 2 & 1 \\
 4 & 5 & 6 \\
 9 & 8 & 7
\end{bmatrix}.
\end{align*}
Algorithm~\ref{alg:bottleneck} can be applied to this graph. The sorted list of weights is clearly the integers from $9$ to $1$, and initially there are six maximum matchings in this graph. Removing the greatest weight edge leaves 4 maximum matchings in the graph. Removing the greatest weight edge again leaves us with the graph whose corresponding cost matrix is
\begin{align*} C = 
\begin{bmatrix}
 3 & 2 & 1 \\
 4 & 5 & 6 \\
 \infty & \infty & 7
\end{bmatrix}.
\end{align*}
where $\infty$ represents the absence of an edge.\\

Here if we were to remove the greatest weight edge there would be no maximum matchings available, so the edge with weight $7$ is the bottleneck. We can now define the cost matrices of the two subgraphs $\mathcal{G}^-$ and $\mathcal{G}^+$ to be
\begin{align*}
C^- = 
\begin{bmatrix}
 3 & 2 \\
 4 & 5 \\
\end{bmatrix}, 
C^+ = 
\begin{bmatrix}
 3 & 2 & 1 \\
 4 & 5 & 6 \\
 9 & 8 & \infty 
\end{bmatrix}.
\end{align*}
Running Algorithm~\ref{alg:bottleneck} on these two subgraphs, we find $e^-$ and $e^+$ to be the edges with weight 4 and 8 respectively. In accordance with Lemma~\ref{lm:Ordering}, $w_{e^-}=4 \leq w_{e^*}=7 \leq w_{e^+}=8$. Using \eqref{eq:DeltaRobust}, we find the $\Delta$ value for this graph to be $\frac{1}{2}$. Therefore, the graph $\mathcal{G}$ is robust to any set of weight perturbations such that no perturbation has magnitude greater than or equal to $\frac{1}{2}$.\hfill $\square$ \\
\end{example}

Equation~\eqref{eq:DeltaRobust} states that as long the perturbations are all within the interval $[-\Delta,\Delta]$, the optimal assignment remains unchanged. This results coincides with the result presented in \cite{sotskovStability1995}. However, by relaxing these constraints, allowing the intervals to be unique and asymmetric, we are able to expand this set of allowable perturbation intervals.\\

Note that in the proof for Theorem~\ref{thm:adjBottlenecks}, the salient property of $\Delta$ is that edges with weight $w_j \geq w_{e^+}$ cannot be perturbed to have weight $\bar{w}_j < \bar{w}_{e^*}$, and edges with weight $w_j \leq w_{e^-}$ cannot be perturbed to have weight $\bar{w}_j > \bar{w}_{e^*}$. However, this property can be maintained with larger and asymmetric bounds on the perturbations. Additionally, the edges which are between $e^-,e^*,e^+$ are irrelevant to the method, they do not change the bottleneck edge regardless of their weight. Therefore, for edges between $e^-$ and $e^+$ other than $e^*$, we need not bound their perturbations at all. This leads us to Algorithm~\ref{alg:slowBound} below.

\begin{algorithm}
 \KwData{($e^*,\mathcal{G})$}

 $e^- \leftarrow A(\mathcal{G}^-)$ \;
 $e^+ \leftarrow A(\mathcal{G}^+)$ \;

 $\mathcal{U}$  $\backslash \backslash$ \emph{Upper bound on the allowable perturbations} \;
 $\mathcal{L}$  $\backslash \backslash$ \emph{Lower bound on the allowable perturbations} \;
 $\Delta^+ \leftarrow \frac{w_{e^+}-w_{e^*}}{2}$ \;
 $\Delta^- \leftarrow \frac{w_{e^*}-w_{e^-}}{2}$ \;
\For{$w_j\in\mathcal{W}$}{

	\uIf{$w_j \geq w_{e^+}$}{
		$\mathcal{L}[j] = (w_{e^*} + \Delta^+) - w_j$ \;
	}
	\uElseIf{$w_j \leq w_{e^-}$}{
		$\mathcal{U}[j] = (w_{e^*} - \Delta^-) - w_j$ \;
	}
	\uElseIf{$j \neq e^*$}{
		$\mathcal{L}[j] = -\infty$ \;
		$\mathcal{U}[j] = \infty$ \;
	}
	\Else{
		$\mathcal{L}[j] = -\Delta$ \;
		$\mathcal{U}[j] = \Delta$ \;
	}
}
return $\mathcal{U},\mathcal{L}$ \;

\caption{Relaxed $\Delta$ Algorithm for Robustness \label{alg:slowBound}}
\end{algorithm}

\begin{prop}
The bottleneck assignment is robust, in the sense of Definition~\ref{def:Robustness}, to the set of perturbations defined in Algorithm~\ref{alg:slowBound}.
\end{prop}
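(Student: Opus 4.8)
The plan is to mirror the proof of Theorem~\ref{thm:adjBottlenecks}, exploiting the observation made just before Algorithm~\ref{alg:slowBound}: the only features of the uniform bound $\Delta$ used in that proof are that an edge with original weight $w_j\ge w_{e^+}$ cannot be pushed to or below $\bar w_{e^*}$, and an edge with original weight $w_j\le w_{e^-}$ cannot be pushed to or above $\bar w_{e^*}$. I would fix any perturbation $\{\delta_e\}$ respecting the per-edge bounds $\mathcal{L},\mathcal{U}$ returned by Algorithm~\ref{alg:slowBound}, write $\bar w_e=w_e+\delta_e$, and let $\Delta=\min(\Delta^-,\Delta^+)$ denote the scalar bound of Theorem~\ref{thm:adjBottlenecks}. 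As in that proof, let $\pi^*$ be a maximum matching of $\mathcal{G}$ whose greatest-weight edge is $e^*$, with every other edge of $\pi^*$ having weight at most $w_{e^-}$. It then suffices to establish (a) $\bar w_j\le\bar w_{e^*}$ for every $j\in\pi^*\setminus\{e^*\}$, which forces the bottleneck weight of $\bar{\mathcal G}$ to be at most $\bar w_{e^*}$, and (b) $\bar w_j\ge\bar w_{e^*}$ for every edge $j$ with $w_j\ge w_{e^+}$.

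Both facts follow from the arithmetic built into Algorithm~\ref{alg:slowBound}. For (a): each $j\in\pi^*\setminus\{e^*\}$ has $w_j\le w_{e^-}$, hence receives $\mathcal{U}[j]=(w_{e^*}-\Delta^-)-w_j$, so $\bar w_j\le w_{e^*}-\Delta^-$; and $|\delta_{e^*}|\le\Delta\le\Delta^-$ gives $\bar w_{e^*}\ge w_{e^*}-\Delta^-$, so $\bar w_j\le\bar w_{e^*}$. For (b): any $j$ with $w_j\ge w_{e^+}$ receives $\mathcal{L}[j]=(w_{e^*}+\Delta^+)-w_j$, so $\bar w_j\ge w_{e^*}+\Delta^+$, while $\bar w_{e^*}\le w_{e^*}+\Delta\le w_{e^*}+\Delta^+$, so $\bar w_j\ge\bar w_{e^*}$. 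The ``between'' edges ($w_{e^-}<w_j<w_{e^+}$, $j\ne e^*$), which Algorithm~\ref{alg:slowBound} leaves unbounded, are never invoked in either step.

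With (a) and (b) in place, I would reproduce the contradiction argument of Theorem~\ref{thm:adjBottlenecks} essentially verbatim. Suppose some edge $x\ne e^*$ with $\bar w_x\ne\bar w_{e^*}$ is a bottleneck edge of $\bar{\mathcal G}$. By (a) the bottleneck weight is at most $\bar w_{e^*}$, so $\bar w_x<\bar w_{e^*}$, and $x$ lies in a maximum matching $\pi^x$ of $\bar{\mathcal G}$ every edge of which has perturbed weight strictly below $\bar w_{e^*}$; in particular $e^*\notin\pi^x$. By (b) no edge of $\pi^x$ has original weight $\ge w_{e^+}$ — and this is exactly the step where leaving the ``between'' edges unbounded is harmless, since such an edge already has original weight below $w_{e^+}$ no matter how it is perturbed. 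Hence $\pi^x$ is a maximum matching of $\mathcal{G}^+$ all of whose weights are below $w_{e^+}$, contradicting the definition of $e^+$. Therefore no such $x$ exists, the bottleneck weight of $\bar{\mathcal G}$ equals $\bar w_{e^*}$, it is attained by $\pi^*$ with $e^*$ a greatest-weight edge, and so $e^*\in A(\bar{\mathcal G})$. The degenerate case in which $\mathcal{G}^+$ has no maximal matching is even simpler: every maximum matching of $\mathcal{G}$ then contains $e^*$, so $\pi^x$ cannot exist for that reason alone, and only fact (a) — hence only the bound on the low-weight edges — is needed.

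The step I expect to require the most care is the bookkeeping in the second paragraph: lining up the one-sided per-edge bounds with the self-perturbation bound $\Delta$ on $e^*$ so that the (non-strict) inequalities $\bar w_j\ge\bar w_{e^*}$ for the high edges and $\bar w_j\le\bar w_{e^*}$ for the edges of $\pi^*\setminus\{e^*\}$ hold simultaneously for every admissible perturbation, and then checking that the strict inequality $\bar w_x<\bar w_{e^*}$ genuinely keeps all of those edges out of $\pi^x$. Once that is verified, the proposition is immediate from the same contradiction as Theorem~\ref{thm:adjBottlenecks}.
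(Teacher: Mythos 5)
Your proposal is correct and takes essentially the same route as the paper: it verifies that the per-edge bounds of Algorithm~\ref{alg:slowBound} force $\bar{w}_j \geq \bar{w}_{e^*}$ for edges with $w_j \geq w_{e^+}$ and $\bar{w}_j \leq \bar{w}_{e^*}$ for edges with $w_j \leq w_{e^-}$, and then reuses the contradiction argument of Theorem~\ref{thm:adjBottlenecks}. The paper simply states that Theorem~\ref{thm:adjBottlenecks}'s proof then applies, whereas you replay it explicitly (including the case where $\mathcal{G}^+$ has no maximal matching), which is the same argument carried out in more detail.
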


\begin{proof}
Let $e^-$,$e^*$, and $e^+$ be defined as in \eqref{eq:i-} to \eqref{eq:i+}, and let $\Delta^+$ and $\Delta^-$ be defined by Algorithm~\ref{alg:slowBound}. Let $\bar{w}_j$ represent the perturbed weight of edge $j$. Define the sets of upper and lower bounds returned by Algorithm~\ref{alg:fastBound} as $\mathcal{U}$ and $\mathcal{L}$. Let $(\mathcal{L}_j,\mathcal{U}_j)$ represent the allowable perturbation for edge $j$, such that $\bar{w}_j - w_j \in (\mathcal{L}_j,\mathcal{U}_j)$.\\

If the bounds provided by Algorithm~\ref{alg:slowBound} guarantee $\bar{w}_j \geq \bar{w}_{e^*}$ for any edge $j$ with weight $w_j \geq w_{e^+}$, and $\bar{w}_j \leq \bar{w}_{e^*}$ for any edge $j$ with weight $w_j \leq w_{e^-}$, then the proof for Theorem~\ref{thm:adjBottlenecks} applies to the set of bounds from Algorithm~\ref{alg:slowBound}.

For any edge $j$ with weight $w_j \geq w_{e^+}$,
\begin{align*}
\bar{w}_j > w_j+\mathcal{L}_j = w_j + (w_{e^*} + \Delta^+ - w_j) > \bar{w}_{e^*}. 
\end{align*}

For any edge $j$ with weight $w_j \leq w_{e^-}$,
\begin{align*}
\bar{w}_j < w_j+\mathcal{U}_j = w_j + (w_{e^*} - \Delta^- - w_j) < \bar{w}_{e^*}. 
\end{align*}
Therefore, the perturbation intervals provided by Algorithm~\ref{alg:slowBound} have the same properties as those provided in Theorem~\ref{thm:adjBottlenecks}, and the bottleneck assignment is robust to perturbations within the bounds provided by Algorithm~\ref{alg:slowBound}.

\end{proof}
Consider the case where an edge with weight greater than the bottleneck is subject to a positive perturbation, or a an edge with weight less than the bottleneck edge is subject to a negative perturbation. The bottleneck assignment is clearly robust to these types perturbations. This intuition provides additional insight into the set of allowable perturbations $\Lambda$.
\begin{prop}
Let $w_x$ be the weight of an edge $x\in\mathcal{E}$, $\bar{w}_x$ be the perturbed weight of that edge, and $e^* \in \mathcal{E}$ be the bottleneck as defined in \eqref{eq:i}. The bottleneck assignment is robust, in the sense of Definition~\ref{def:Robustness}, to any perturbations for which $\bar{w}_x > \bar{w}_{e^*}$ iff $w_x > w_{e^*}$, and $\bar{w}_x < \bar{w}_{e^*}$ iff $w_x < w_{e^*}$ for all $x \in \mathcal{E}$. \label{halfSpace}
\end{prop}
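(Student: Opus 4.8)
The high-level observation behind the claim is that the bottleneck solution produced by the threshold algorithm (Algorithm~\ref{alg:bottleneck}) depends on the weights only through the relative order of each edge weight with respect to the bottleneck value $w_{e^*}$, and the hypothesis is precisely the statement that the perturbation leaves this order intact. I would therefore reprise the argument used for Theorem~\ref{thm:adjBottlenecks}, but feed it the hypothesis directly rather than passing through the bounds $w_{e^-}$, $w_{e^+}$. First I would extract, exactly as in the proof of Lemma~\ref{lm:Ordering}, the two consequences of $e^*\in A(\mathcal{G})$ that I will use: there is a maximal matching $\pi^*$ containing $e^*$ in which $e^*$ is a heaviest edge (so every edge of $\pi^*$ has weight at most $w_{e^*}$), and there is no maximal matching composed entirely of edges of weight strictly less than $w_{e^*}$. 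I would also note that, although the hypothesis only mentions the strict inequalities, taking contrapositives gives $\bar{w}_x=\bar{w}_{e^*}$ if and only if $w_x=w_{e^*}$.

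Next I would run the contradiction argument. Suppose some edge $x\neq e^*$ with $\bar{w}_x\neq\bar{w}_{e^*}$ is a bottleneck edge of $\bar{\mathcal{G}}$. Every edge of $\pi^*$ has unperturbed weight $\le w_{e^*}$, so by the hypothesis each has perturbed weight $\le\bar{w}_{e^*}$; since $e^*\in\pi^*$, the heaviest perturbed weight in $\pi^*$ equals $\bar{w}_{e^*}$. As $x$ is a bottleneck edge, $\bar{w}_x$ is at most the heaviest edge of every maximal matching, in particular of $\pi^*$, whence $\bar{w}_x\le\bar{w}_{e^*}$ and therefore $\bar{w}_x<\bar{w}_{e^*}$. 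Let $\pi^x$ be a maximal matching in which $x$ is a heaviest edge; all its edges have perturbed weight $\le\bar{w}_x<\bar{w}_{e^*}$, so by the hypothesis all its edges have unperturbed weight strictly below $w_{e^*}$. Thus $\pi^x$ is a maximal matching made up entirely of edges of weight $<w_{e^*}$, contradicting the second consequence recorded above.

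This rules out any bottleneck edge of $\bar{\mathcal{G}}$ whose perturbed weight differs from $\bar{w}_{e^*}$, so the bottleneck value of $\bar{\mathcal{G}}$ equals $\bar{w}_{e^*}$; the remaining step is to confirm the stronger conclusion $e^*\in A(\bar{\mathcal{G}})$ demanded by Definition~\ref{def:Robustness}. For this I would argue that membership in $A(\cdot)$ is purely combinatorial. In any run of Algorithm~\ref{alg:bottleneck} on $\mathcal{G}$ that outputs $e^*$, all edges of weight $>w_{e^*}$ are deleted first (the loop cannot stop earlier because $\pi^*$ survives), then some set $F$ of weight-$w_{e^*}$ edges, then $e^*$ itself, and this run is entirely determined by the partition of the edges into those below, equal to, and above $w_{e^*}$. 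The hypothesis gives $\bar{\mathcal{G}}$ the same partition relative to its own bottleneck value $\bar{w}_{e^*}$, so the identical deletion sequence is a legitimate run of Algorithm~\ref{alg:bottleneck} on $\bar{\mathcal{G}}$ and again returns $e^*$; hence $e^*\in A(\bar{\mathcal{G}})$. The one place I expect to need care is this last bookkeeping: checking that each intermediate edge set in that run still admits a maximal matching (so the loop genuinely continues) and that the set left after deleting $e^*$ does not (so the loop genuinely stops at $e^*$); everything else is a direct transcription of the Theorem~\ref{thm:adjBottlenecks} argument.
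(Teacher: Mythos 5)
Your argument is sound, but it travels a different road than the paper's. The paper's proof of Proposition~\ref{halfSpace} is a direct order-invariance argument: it partitions the edges into $G$ (weights above $w_{e^*}$) and $L$ (weights below $w_{e^*}$), observes that some maximal matching lies in $L\cup\{e^*\}$ while none lies in $L$, concludes that no edge of $G$ can ever be the bottleneck, and hence that the bottleneck is determined solely by this partition --- which your hypothesis preserves. You instead transplant the contradiction machinery of Theorem~\ref{thm:adjBottlenecks}, feeding it the hypothesis in place of the $w_{e^-}$, $w_{e^+}$ bounds, to show the perturbed bottleneck value must be $\bar{w}_{e^*}$, and then add a separate argument via runs of Algorithm~\ref{alg:bottleneck} to conclude $e^*\in A(\bar{\mathcal{G}})$. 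What your version buys is precision the paper elides: you handle ties ($\bar{w}_x=\bar{w}_{e^*}$ iff $w_x=w_{e^*}$, by contrapositives) and you explicitly verify the membership $e^*\in A(\bar{\mathcal{G}})$ required by Definition~\ref{def:Robustness}, which the paper's proof leaves implicit; what the paper's version buys is brevity. Two remarks on your last step: the ``identical deletion sequence'' need not literally be a legal run on $\bar{\mathcal{G}}$, since the perturbation can permute the descending order among the edges above the bottleneck; this is harmless, because any legal run deletes all of those edges before touching the tied ones ($\pi^*$ avoids them and survives), after which the remaining edge sets coincide with those of the original run, so the fix is routine. Alternatively, you can skip the algorithmic bookkeeping altogether: once your first part gives that the bottleneck value of $\bar{\mathcal{G}}$ is $\bar{w}_{e^*}$, the matching $\pi^*$ itself is an optimal matching of $\bar{\mathcal{G}}$ in which $e^*$ is a heaviest edge, so $e^*\in A(\bar{\mathcal{G}})$ immediately.
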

\begin{proof}
Let $G$ be the set of all edges with weights greater than the bottleneck edge $e^*$, and $L$ the set of all edges with weights less than the bottleneck edge $e^*$. By the definition of the bottleneck edge, there is guaranteed to be a maximum matching with all edges in the set $\{L \cup e^*\}$, and no maximum matching with all edges in the set $L$. By the definition of the set $G$, the maximum matching of edges from the set $\{L \cup e^*\}$ will all have lower weights than any edges in the set $G$. Thus, every edge in the set $G$ is excluded from being the bottleneck. Therefore, the ordering of the edges within the sets $G$ and $L$ has no impact on the bottleneck.
\end{proof}
\begin{remark*}
Proposition~\ref{halfSpace} demonstrates that the allowable perturbations on all edges with weight greater than the bottleneck are unbounded above, and the allowable perturbations on all edges with weights less than the bottleneck are unbounded below. In light of this, we will refer to the allowable perturbation sets as \emph{half space constraints}, because they will only specify an upper or lower bound with the other bound being negative or positive infinity.
\end{remark*}

Finally, note that the method for constructing the set $\Lambda _\Delta$ and the subsequent expansion of the allowable perturbation intervals has nearly the order computational complexity as the BAP, because it involves solving the bottleneck assignment problem on two subgraphs. One of these subgraphs, $\mathcal{G}^+$, is only one edge smaller than the original. However, by using only Proposition~\ref{halfSpace}, we can construct a different set of allowable perturbation intervals, as shown in Algorithm~\ref{alg:fastBound}.
\begin{algorithm}
 \KwData{($e^*,\mathcal{W})$}
 $\mathcal{U}$  $\backslash \backslash$ \emph{Upper bound on the allowable perturbations} \;
 $\mathcal{L}$  $\backslash \backslash$ \emph{Lower bound on the allowable perturbations} \;
 $\Delta^+ \leftarrow \frac{1}{2}\min(|w_j-w_{e^*}|),  \forall \{j \mid w_j \geq e^*\})$ \;
 $\Delta^- \leftarrow \frac{1}{2}\min(|w_j-w_{e^*}|),  \forall \{j \mid w_j \leq e^*\})$ \;
\For{$w_j\in\mathcal{W}$}{

	\uIf{$w_j > w_{e^*}$}{
		$\mathcal{L}[j] = (w_{e^*} + \Delta^+) - w_j$ \;
	}
	\uElseIf{$w_j < w_{e^*}$}{
		$\mathcal{U}[j] = (w_{e^*} - \Delta^-) - w_j$ \;
	}
	\Else{
		$\mathcal{L}[j] = -\Delta^-$ \;
		$\mathcal{U}[j] = \Delta^+$ \;
	}
}
return $\mathcal{U},\mathcal{L}$ \;

\caption{Naive Algorithm for Robustness \label{alg:fastBound}}
\end{algorithm}

\begin{prop}
Algorithm~\ref{alg:fastBound} constructs a set of allowable perturbation intervals in $\mathcal{O}(N)$ where $N=|\mathcal{E}|$. \label{prop:HalfSpace}
\end{prop}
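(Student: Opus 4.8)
The plan is to split the statement into a correctness claim (the intervals returned are \emph{allowable} perturbation intervals) and a running-time claim ($\mathcal{O}(N)$), and to dispatch the first by appealing to Proposition~\ref{halfSpace} rather than re-deriving robustness from the matching structure. Concretely, write $\mathcal{L},\mathcal{U}$ for the arrays returned by Algorithm~\ref{alg:fastBound} and let $\delta_j\in(\mathcal{L}[j],\mathcal{U}[j])$ be an admissible perturbation of edge $j$, with $\bar w_j=w_j+\delta_j$. It suffices to verify that $\bar{w}_x>\bar{w}_{e^*}$ iff $w_x>w_{e^*}$ and $\bar{w}_x<\bar{w}_{e^*}$ iff $w_x<w_{e^*}$ for all $x$, after which Proposition~\ref{halfSpace} delivers robustness in the sense of Definition~\ref{def:Robustness}.

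For that verification I would go branch by branch through the loop, exactly as in the proof of the proposition following Algorithm~\ref{alg:slowBound}. For an edge $j$ with $w_j>w_{e^*}$ the returned bound gives $\bar{w}_j>w_j+\mathcal{L}[j]=w_{e^*}+\Delta^+$, while $e^*$ lands in the final branch so $\bar{w}_{e^*}<w_{e^*}+\Delta^+$; hence $\bar{w}_j>\bar{w}_{e^*}$. The case $w_j<w_{e^*}$ is symmetric using $\Delta^-$. The point of defining $\Delta^+$ and $\Delta^-$ as half the smallest gap on the respective side of $w_{e^*}$ is precisely that $w_{e^*}+\Delta^+$ sits strictly below every weight above the bottleneck and $w_{e^*}-\Delta^-$ strictly above every weight below it, which also shows each returned interval contains $0$ and is one-sided (a half-space constraint). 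Two degenerate cases deserve an explicit sentence: if one side of $w_{e^*}$ is empty, the corresponding $\Delta$ is a minimum over $\varnothing$ and is read as $+\infty$, giving an unbounded interval as expected; and an edge $x\neq e^*$ with $w_x=w_{e^*}$ receives the same bounds as $e^*$ through the final branch, so $e^*$ still belongs to $A(\bar{\mathcal{G}})$ even if a tied edge overtakes it, which is all Definition~\ref{def:Robustness} requires.

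The running-time claim is then an operation count, and I do not expect a genuine obstacle here — the main work is really the branch-by-branch check above. Computing $\Delta^+$ and $\Delta^-$ each require a single pass over $\mathcal{W}$: compare $w_j$ with $w_{e^*}$ and keep a running minimum of $|w_j-w_{e^*}|$ on the relevant side, which is $\mathcal{O}(N)$. Allocating $\mathcal{U},\mathcal{L}$ and running the loop over the $N$ edges costs a constant number of comparisons, subtractions and array writes per edge, again $\mathcal{O}(N)$, and returning the two arrays is $\mathcal{O}(N)$; summing gives $\mathcal{O}(N)$. The only point worth emphasising is structural rather than technical: unlike Algorithm~\ref{alg:slowBound}, this procedure never invokes the bottleneck solver $A(\cdot)$ on a subgraph (in particular not on $\mathcal{G}^+$, which is only one edge smaller than $\mathcal{G}$), so it avoids the $\mathcal{O}(N^2\sqrt{|\mathcal{V}|})$ term entirely and runs in linear time, at the expense of using only the information in Proposition~\ref{halfSpace} rather than the subgraph bottlenecks $e^-$ and $e^+$.
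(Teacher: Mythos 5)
Your proposal takes essentially the same route as the paper's proof: correctness is reduced to Proposition~\ref{halfSpace} by checking, branch by branch, that every edge originally above the bottleneck keeps perturbed weight above $\bar{w}_{e^*}$ (via $\mathcal{L}[j]=(w_{e^*}+\Delta^+)-w_j$ and $\bar{w}_{e^*}<w_{e^*}+\Delta^+$) and symmetrically below, and the $\mathcal{O}(N)$ claim is the same two-pass operation count (one pass over $\mathcal{W}$ for $\Delta^+,\Delta^-$, one pass to write $\mathcal{U},\mathcal{L}$), never calling the bottleneck solver on a subgraph. On all of that you and the paper agree in substance and in detail.

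The one place you go beyond the paper is the sentence about tied edges, and that sentence does not hold up. Proposition~\ref{halfSpace} is an ``iff'': for an edge $x\neq e^*$ with $w_x=w_{e^*}$ its hypothesis forces $\bar{w}_x=\bar{w}_{e^*}$, which the interval $(-\Delta^-,\Delta^+)$ returned for tied edges does not enforce, so the proposition simply does not cover that case; and the conclusion you assert ($e^*\in A(\bar{\mathcal{G}})$ even if a tied edge overtakes it) is false in general. For example, take a $3\times3$ complete graph with $w_{(1,1)}=w_{(1,2)}=10$, $w_{(2,1)}=w_{(2,2)}=w_{(3,3)}=1$, and all remaining weights $100$; then $A(\mathcal{G})=\{(1,1),(1,2)\}$, and with $e^*=(1,1)$ and the minima in Algorithm~\ref{alg:fastBound} read over strictly larger/smaller weights one gets $\Delta^+=45$, $\Delta^-=4.5$. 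Perturbing $(1,2)$ by $-4$ and $(1,1)$ by $+4$ stays inside the returned intervals, yet the perturbed graph has the unique bottleneck edge $(1,2)$, so $e^*\notin A(\bar{\mathcal{G}})$ and Definition~\ref{def:Robustness} fails. To be fair, the paper's own proof is silent on ties (it only argues about the sets of strictly larger and strictly smaller weights), so you are not deviating from its method; but you should delete or qualify that claim, e.g.\ by assuming no other edge shares the bottleneck weight, or by letting the minima defining $\Delta^\pm$ range over tied edges as well (which collapses them to $0$ whenever ties exist). The degenerate-case reading of an empty side as $\Delta^\pm=+\infty$ is fine and consistent with the half-space picture.
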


\begin{proof}
Define $w_x$ to be the weight of an edge $x\in\mathcal{E}$, $\bar{w}_x$ to be the perturbed weight of that edge, and $e^* \in \mathcal{E}$ be the bottleneck edge as defined in \eqref{eq:i}. Define the sets of upper and lower bounds returned by Algorithm~\ref{alg:fastBound} as $\mathcal{U}$ and $\mathcal{L}$. Let $(\mathcal{L}_j,\mathcal{U}_j)$ represent the allowable perturbation for edge $j$, such that $\bar{w}_j - w_j \in (\mathcal{L}_j,\mathcal{U}_j)$.\\

Let $G$ be the set of edges with weights greater than the bottleneck, and $L$ the set of edges with weight less than the bottleneck. According to Proposition~\ref{halfSpace}, if these sets are unchanged by the perturbation then the bottleneck assignment is robust to that perturbation. Let $w_j > w_{e^*}$ be the weight of an edge in set $G$, 
\begin{align*}
\bar{w}_j > w_j+\mathcal{L}_j = w_j + (w_{e^*} + \Delta^+ - w_j) > \bar{w}_{e^*}. 
\end{align*}
Therefore, $\bar{w}_j > \bar{w}_{e^*}$ for all elements of $G$, so $G$ is unchanged by the perturbation. Let $w_j < w_{e^*}$ be the weight of an edge in set $L$, 
\begin{align*}
\bar{w}_j < w_j+\mathcal{U}_j = w_j + (w_{e^*} - \Delta^- - w_j) < \bar{w}_{e^*}. 
\end{align*}
Therefore, $\bar{w}_j < \bar{w}_{e^*}$ for all elements of $L$, so $L$ is unchanged by the perturbation.$G$ and $L$ are unchanged by the perturbation, so the bottleneck assignment is robust to this set of perturbations.\\

Algorithm~\ref{alg:fastBound} constructs these bounds by performing one pass through the weight set to find the minimum distance weight to the bottleneck, and another pass through the edges to construct their upper and lower bounds. Therefore, it has complexity $\mathcal{O}(|\mathcal{E}|)$.
\end{proof}

\section{Numerical Example}

First, to illustrate the various bounds, extensions, and applications, we will analyze a small example. Consider a fully connected multi-agent system with 3 agents and 4 tasks with cost matrix 
\begin{align*} 
C = 
\begin{bmatrix}
64.5 && 79.2 && 25.0 && 9.8 \\
85.9 && 81.2 && 21.5 && 28.3 \\
47.1 && 12.1 && 41.3 && 35.7
\end{bmatrix}.
\end{align*}

The first step towards defining the set of allowable perturbations is to run the bottleneck assignment algorithm on the full graph to find $e^*$. The bottleneck edge $e^*$ this example is $(2,3)$ with weight $21.5$. The cost matrices of the two subgraphs $\mathcal{G^-}$ and $\mathcal{G^+}$ are 
\begin{align*}
C^- &= 
\begin{bmatrix}
64.5 && 79.2 && 9.8 \\
47.1 && 12.1 && 35.7
\end{bmatrix}, \\
C^+ &= 
\begin{bmatrix}
64.5 && 79.2 && 25.0 && 9.8 \\
85.9 && 81.2 && \infty && 28.3 \\
47.1 && 12.1 && 41.3 && 35.7
\end{bmatrix}.
\end{align*}

Running the bottleneck algorithm on the two subgraphs gives $e^- = (3,2)$ with weight $w_{e^-} = 12.1$ and $e^+ = (2,4)$ with weight $w_{e^+} = 28.3$. To find the set of allowable perturbation intervals we first use Algorithm~\ref{alg:slowBound}, because we have already identified $e^-$, $e^*$, and $e^+$. The results are presented in Table~\ref{tble:RelaxedDeltas}.
\begin{table}[ht]
\renewcommand{\arraystretch}{1.5}
\begin{center}
\caption{Allowable Perturbations from Algorithm~\ref{alg:slowBound} \label{tble:RelaxedDeltas}}
\resizebox{0.5\textwidth}{!}{
\begin{tabular}{l c  c  c  c }
 & T$_1$ & T$_2$ & T$_3$ & T$_4$ \\
 A$_1$ & $[-39.6,\infty]$ & $[-54.3,\infty]$ & $[-\infty,\infty]$ & $[-\infty,7.0]$ \\ 
 A$_2$ & $[-61.0,\infty]$ & $[-56.3,\infty]$ & $[-4.7,3.4]$ & $[-3.4,\infty]$ \\ 
 A$_3$ & $[-22.2,\infty]$ & $[-\infty,4.7]$ & $[-16.4,\infty]$ & $[-10.8,\infty]$ \\ 
\end{tabular}
}
\end{center}
\end{table}

Next, we compare the allowable perturbation intervals from running Algorithm~\ref{alg:fastBound} over the same graph. The results are presented in Table~\ref{tble:fastBound}
\begin{table}[ht]
\renewcommand{\arraystretch}{1.5}
\begin{center}
\caption{Allowable Perturbations from Algorithm~\ref{alg:fastBound} \label{tble:fastBound}}
\resizebox{0.5\textwidth}{!}{
\begin{tabular}{l c  c  c  c }
 & T$_1$ & T$_2$ & T$_3$ & T$_4$ \\
 A$_1$ & $[-41.25,\infty]$ & $[-55.95,\infty]$ & $[-1.75,\infty]$ & $[-\infty,7.0]$ \\ 
 A$_2$ & $[-62.65,\infty]$ & $[-57.95,\infty]$ & $[-4.7,1.75]$ & $[-5.05,\infty]$ \\ 
 A$_3$ & $[-23.85,\infty]$ & $[-\infty,4.7]$ & $[-18.05,\infty]$ & $[-12.45,\infty]$ \\
\end{tabular}
}

\end{center}
\end{table}

Note that for some edges Algorithm~\ref{alg:fastBound} and Algorithm~\ref{alg:slowBound} provide the same bounds. However, the smallest intervals are larger using Algorithm~\ref{alg:slowBound}. Some edges are also entirely unbounded using Algorithm~\ref{alg:slowBound}.\\

In both algorithms edges above the bottleneck can be infinitely positively perturbed, and edges below the bottleneck can be likewise infinitely negatively perturbed. Therefore, to compare the algorithms, we propose examining the magnitude of allowable perturbation in the non-infinite direction. However, note that this method does not capture the number of unbounded edges from Algorithm~\ref{alg:slowBound}.\\

Fig.~\ref{fig:minRobustness} presents simulation results. The graphs generated have $|\mathcal{V_A}|=|\mathcal{V_T}|=[3,100]$ nodes and are fully connected $\mathcal{E}=\mathcal{V}_A\times \mathcal{V}_T$. The plots show the minimum magnitude allowable perturbation, or the ``tightest" allowable perturbation bound. For each graph size from $[3,100]$ we randomly generated 1000 graphs with weights randomly sampled from $[0,100]$.

\begin{figure}[thpb]
  \centering
  \includegraphics[scale=0.55]{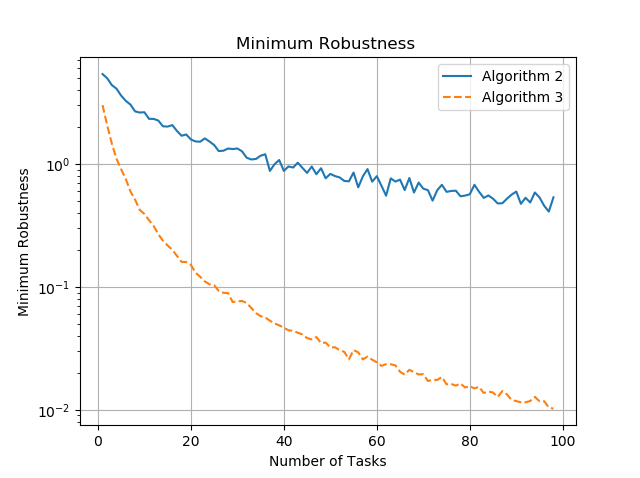}
  \caption{Minimum size of allowable perturbations}
  \label{fig:minRobustness}
\end{figure}

Algorithm~\ref{alg:slowBound} is provides a larger minimum size intervals as the problem size grows. The minimum robustness of Algorithm~\ref{alg:fastBound} is driven entirely by the density of the weights, and the larger the problems the more likely there are many weights near the bottleneck. However, Algorithm~\ref{alg:slowBound}'s minimums also drop as the problem size grows larger, because its performance is tied to the density of the greatest weight edges in the maximum matchings, which will also increase with the number of tasks in the graph.\\

\section{Concluding Remarks}

In this paper the robustness of the bottleneck assignment problem (BAP) to a set of perturbations is analyzed. Two methods of approximating the set of allowable perturbations are proposed, one with the same order computational complexity as the BAP, and the other is linear in the number of edges in the graph. These approaches differ from the previous approaches in literature by allowing all weights to vary simultaneously and independently. Simulations showed that the higher order complexity bound provided better worst case robustness. The robustness of the graph can be used to guarantee the optimality of the bottleneck assignment if the errors in each weight fall within the set of allowable perturbations. \\

In future work, similar robustness for the linear assignment problem, the general assignment problem, and various other assignment problem formulations will be investigated. Another possible future problem would be examining the geometric implications of these robustness intervals, perhaps informing a method of deploying multi-agent system in a pattern likely to lead to robust solutions.




\bibliographystyle{IEEEtran}
\bibliography{IEEEabrv,robustAssignment}

\end{document}